\documentclass[11pt]{amsart}
\usepackage{amsmath,amssymb,amsthm,upref,graphicx,mathrsfs}


\usepackage{color,xcolor}
\usepackage[
  colorlinks=true,
  linkcolor=blue,
  citecolor=blue,
  urlcolor=blue]{hyperref}

\numberwithin{equation}{section}


\textwidth15cm
\textheight21cm
\addtolength{\topmargin}{-.4cm}
\addtolength{\oddsidemargin}{-1.4cm}
\setlength{\evensidemargin}{\oddsidemargin}
\addtolength{\headheight}{3.2pt}


\newtheorem{thm}{Theorem}[section]
\newtheorem{cor}[thm]{Corollary}
\newtheorem{lem}[thm]{Lemma}
\newtheorem{prop}[thm]{Proposition}

\newtheorem{exaple}[thm]{Example}
\numberwithin{equation}{section}

\begin{document}

\leftline{ \scriptsize}

\vspace{1.3 cm}
\title
{ Inverse Eigenvalue Problem of Cell Matrices}
\author{ Sreyaun Khim$^{a}$ and Kijti Rodtes$^{b,\ast}$ }
\thanks{{\scriptsize
\newline MSC(2010): 15B10, 15B05, 15B48, 35P30, 35P20.  \\ Keywords: Cell matrix, Inverse eigenvalue problem, Euclidean distance matrix.  \\
$^{\ast}$ Corresponding author.\\
E-mail addresses: sreyaun.khim@yahoo.com (Sreyaun Khim), kijtir@nu.ac.th (Kijti Rodtes).\\
$^{A}$ Department of Mathematics, Faculty of Science, Naresuan University, Phitsanulok 65000, Thailand.\\
$^{B}$ Department of Mathematics, Faculty of Science, Naresuan University, and Research Center for Academic Excellent in Mathematics, Phitsanulok 65000, Thailand.\\}}
\hskip -0.4 true cm

\maketitle


\begin{abstract}In this paper, we consider the problem of reconstructing an $n \times n$ cell matrix $D(\vec{x})$ constructed from a vector $\vec{x} = (x_{1}, x_{2},\dots, x_{n})$ of positive real numbers, from a given set of spectral data. In addition, we show that the spectrum of cell matrices $D(\vec{x})$ and $D(\pi(\vec{x}))$ are the same, for every permutation $\pi \in S_{n}$.
\end{abstract}

\vskip 0.2 true cm


\pagestyle{myheadings}
\markboth{\rightline {\scriptsize Sreyaun Khim and  Kijti Rodtes}}
         {\leftline{\scriptsize }}
\bigskip
\bigskip


\vskip 0.4 true cm

\section{Introduction}
An inverse eigenvalue problem (IEP) concerns the reconstruction of a matrix from given spectral data \cite{MT}.  Usually, there is some specific applications of IEP such as system and control theory, system identification, seismic tomography, principal component analysis, exploration and remote sensing, antenna array processing, geophysics, molecular spectroscopy, particle physics, structure analysis, circuit theory, mechanical system simulation, and so on \cite{MTC}.  The objective of an inverse eigenvalue problem is to construct matrices that maintain the specific structure as well as the given spectral property \cite{MT}.  There are many investigations about inverse eigenvalue problem of matrices.  For instance, an inverse eigenvalue problem for symmetric and normal matrices \cite{NR} was studied by Radwan.  An inverse eigenvalue problem for Jacobi matrices was studied by Wang and Zhong \cite{ZW}.  Also a solution of the inverse eigenvalue problem of certain singular Hermitian matrices was studied by Gyamfi, \cite{KB}.  Recently, in 2014 Nazari and Mahdinasab  worked on inverse eigenvalue problem of distance matrices by using orthogonal matrices technique and they constructed Euclidean distance matrices with the eigenvalue list, \cite{AM}.  They provided a new method for construction of distance matrices and also added some conditions so that they can get regular spherical matrices having the given eigenvalues. Even they considered providing a new algorithm to reconstruct distance matrices which are regular spherical matrices, however; they did not consider providing an algorithm to reconstruct cell matrices yet. Since the structure of cell matrices are much simpler than Euclidean distance matrices, it is theoretically interesting on finding the inverse eigenvalue problem on cell matrices.\\

Cell matrices were first introduced by Jacklic and Modic in 2010, \cite{GJ}, which are a special case of Euclidean distance matrices. In 2014, Tarazaga and Kurata, \cite{PT}, studied the set of cell matrices and its relationship with the cone of positive semidefinite diagonal matrices.  Recently in 2015, Kurata and Tarazaga \cite{HK}, consider the problem of finding cell matrices that is closest to a given Euclidean distance matrices with respect to the Frobenius norm.  They also discussed the majorization ordering of the eigenvalues of cell matrices.\\

In this study, we deal with the inverse eigenvalue problem on cell matrices for some lists of spectrums in which we use elementary matrices to find the spectrums of matrices.  We consider the reconstruction of $n \times n$ cell matrices with the given set of $2k$ eigenvalues in which we are free to choose $k$ distinct eigenvalues.  We also show that the spectrum of cell matrices $D(\pi(\vec{x}))$ have the same spectrum as cell matrices $D(\vec{x})$ for any $\pi \in S_{n}$.

\section{Preliminary}

An $n \times n$ matrix $D= (d_{ij})$ is said to be a \emph{Euclidean distance matrix (EDM)} if there exist $x_{1}, x_{2},\dots,{x_{n}}$ in some Euclidean space $\mathbb{R}^{r} (r\leq n)$, such that $d_{ij} = \lVert x_{i}- x_{j}\rVert_{2}^{2}$ for all $i, j= 1, 2,\dots, n$, where $\lVert .\rVert$ is the Euclidean norm. The following properties immediately hold according to the definition of EDM:
\begin{enumerate}
	\item $D$ is nonnegative matrix: $d_{ij}\geq 0$, for all $i,j= 1, 2, \dots, n,$
	\item $D$ is a symmetric matrix: $d_{ij}= d_{ji}$ for all $i, j= 1, \dots,n$,
	\item Diagonal elements are all zero: $d_{ii}= 0$ for all $i= 1, \dots, n$.
\end{enumerate}
\indent \indent These matrices were introduced by Menger in 1928, later they were studied by Schoenberg, \cite{IJ},  when studying positive definite functions and have been received considerable attention.  They are used in applications in geodesy, economics, genetics, psychology, biochemistry, engineering, etc \cite{AM}.  An Euclidean distance matrix $D$ is said to be \emph{spherical} if the construction points of $D$ lie on a hypersphere, otherwise, it said to be non-spherical.  Moreover, a spherical Euclidean distance matrix $D$ is \emph{regular} if the constructive points for $D$ lie on a hypersphere whose center coincides with the centroid of those points.

\indent Let $\vec{x} =(x_{i}), i = 1, 2, \dots, n$ be a vector of real numbers and $\vec{x} > 0.$ A \emph{cell matrix} $D \in \mathbb{R}^{n\times n}$ , associated with $\vec{x}$ denoted by $D(\vec{x})$, is defined as
\begin{equation*}
(D(\vec{x}))_{ij} = \left\{
\begin{array}{ll}
0 & \hbox{if $i = j$,} \\
x_{i}+x_{j} & \hbox{if $i \neq j$.} \\
\end{array}
\right.
\end{equation*}

\indent It is well known that cell matrices are Euclidean distance matrices \cite{PT}.  Furthermore, they are spherical EDM but need not be regular spherical EDM.  According to \cite{GJ} the determinant of principal sub-matrices of cell matrices $D(\vec{x})$ have the following form: \\
\begin{center}
$\det$ $D^{(i)} = (-1)^{i-1} 2^{i-2} (4(i-1) + \sum_{j=1}^{i}\sum_{l=1}^{j-1}\frac{(x_{j}- x_{l})^{2}}{x_{j} x_{l}}) \prod_{k=1}^{i} x_{k}$
,
\end{center}
where $D^{(i)}:= D(1: i, 1: i), i = 1, 2, \dots, n,$ are principal sub-matrices of a cell matrix.   Also, each cell matrix has exactly one positive eigenvalue, the rest of eigenvalues are negative, \cite{GJ}.

 We denote the spectrum of a square matrix $A$ by $\sigma (A)$ which is the set of all eigenvalues of $A$.  To determine the spectrum of matrices, we can use elementary matrices to simplify it.  Throughout this investigation, we use two types of elementary matrices $E$:
\begin{enumerate}
	\item $E=W_{ij}$, the row swapping $(R_{i}\leftrightarrow R_{j})$ matrix, (so $W_{ij}^{-1}= W_{ij}$,)
	\item $E=S_{ij}(\lambda)$, the row sum $(R_{i} \rightarrow R_{i} +\lambda R_{j})$ matrix, (so $S_{ij}^{-1} (\lambda)= S_{ij}(-\lambda)$).
\end{enumerate}

\indent It is well known that
\begin{center}
$\sigma(D(\vec{x})) = \sigma (ED(\vec{x})E^{-1})$,	
\end{center}
for any elementary matrices $E$.  Also, for the case of block matrices having the form $$A=  \left(
                                                                                        \begin{array}{cc}
                                                                                          X & Y \\
                                                                                          O & Z \\
                                                                                        \end{array}
                                                                                      \right),
$$ where $X,Z$ are square matrices and $O$ is the zero matrix, we have $\sigma(A)=\sigma(X)\cup\sigma(Z)$.  In particular, if we can express the $ED(\vec{x})E^{-1}$ as upper or lower triangular matrices then the entries on the main diagonal are eigenvalues of the matrix.

\section{A construction of cell matrices with $k$ distinct variables}
We first consider the inverse eigenvalue problem for $3\times 3$ cell matrices.  To do this, it is necessary to concentrate only on the list of spectrums with zero sum since their traces are all zero.
\begin{prop}
	Let $\sigma = \{\lambda_{1}, \lambda_{2}, \lambda_{3} \} \subseteq \mathbb{R}$ with $\lambda_{1}\geq 0 > \lambda_{3} \geq \lambda_{2}$ and $\lambda_{1}+ \lambda_{2}+ \lambda_{3}= 0.$ Then $\sigma$ is the spectrum of a cell matrix, $D(\vec{a})$, constructed by $\vec{a}= (\sqrt{\frac{\arrowvert \lambda_{1} \lambda_{2} \arrowvert}{2}}- \frac{\arrowvert \lambda_{3} \arrowvert}{2}, \frac{\arrowvert \lambda_{3}\arrowvert}{2}, \frac{\arrowvert \lambda_{3}\arrowvert}{2})$.
\end{prop}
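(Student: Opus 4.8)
The plan is to substitute the proposed vector into the definition of a cell matrix and verify directly that its characteristic polynomial splits into linear factors with roots $\lambda_1,\lambda_2,\lambda_3$. Write $a_1=\sqrt{|\lambda_1\lambda_2|/2}-|\lambda_3|/2$ and $a_2=a_3=|\lambda_3|/2$, and set $b:=a_1+a_2=\sqrt{|\lambda_1\lambda_2|/2}$ and $c:=a_2+a_3=|\lambda_3|$. Because the last two entries of $\vec a$ coincide, the cell matrix takes the symmetric form
\[
D(\vec a)=\begin{pmatrix} 0 & b & b \\ b & 0 & c \\ b & c & 0 \end{pmatrix},
\]
which is invariant under interchanging its last two coordinates. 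This symmetry is the structural feature that drives the whole computation.

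Before computing the spectrum I would first check that $\vec a$ is admissible, i.e.\ that all of its entries are positive, since otherwise $D(\vec a)$ is not a cell matrix in the sense defined above. As $\lambda_3<0$ we have $a_2=a_3=|\lambda_3|/2>0$ at once, so the only point to settle is $a_1>0$, equivalently $2|\lambda_1\lambda_2|>\lambda_3^{2}$. Using $\lambda_1+\lambda_2=-\lambda_3=|\lambda_3|$ to rewrite $|\lambda_1\lambda_2|=-\lambda_1\lambda_2=\lambda_2^{2}+\lambda_2\lambda_3$, and then invoking $\lambda_2\le\lambda_3<0$ (so that $\lambda_2^{2}\ge\lambda_3^{2}$ and $\lambda_2\lambda_3>0$), the inequality follows. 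I expect this positivity verification to be the one genuinely delicate point of the argument, as it is the only place where the full ordering hypothesis $\lambda_1\ge 0>\lambda_3\ge\lambda_2$ is really needed.

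For the eigenvalues themselves, the symmetry makes $(0,1,-1)^{T}$ an eigenvector with eigenvalue $-c=\lambda_3$; its orthogonal complement, spanned by $(1,0,0)^{T}$ and $(0,1,1)^{T}$, is invariant, and on it $D(\vec a)$ acts through a $2\times 2$ matrix of trace $c$ and determinant $-2b^{2}$. Equivalently, and perhaps cleaner to present, one computes the characteristic polynomial outright:
\[
\det\bigl(D(\vec a)-\lambda I\bigr)=-\lambda^{3}+(c^{2}+2b^{2})\lambda+2cb^{2}=-(\lambda+c)\bigl(\lambda^{2}-c\lambda-2b^{2}\bigr).
\]
Hence the spectrum of $D(\vec a)$ is $-c$ together with the two roots of $\lambda^{2}-c\lambda-2b^{2}$.

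Finally I would match the numbers. Since $c=|\lambda_3|=-\lambda_3=\lambda_1+\lambda_2$ and $2b^{2}=|\lambda_1\lambda_2|=-\lambda_1\lambda_2$ (the product being nonpositive because $\lambda_1\ge 0>\lambda_2$), the quadratic factor equals $\lambda^{2}-(\lambda_1+\lambda_2)\lambda+\lambda_1\lambda_2=(\lambda-\lambda_1)(\lambda-\lambda_2)$ by Vieta's relations, while the remaining root is $-c=\lambda_3$. Therefore $\sigma(D(\vec a))=\{\lambda_1,\lambda_2,\lambda_3\}=\sigma$, which is exactly the claim.
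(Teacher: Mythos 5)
Your proof is correct, and it runs in the opposite logical direction from the paper's. The paper treats $\vec a=(a_1,a_2,a_3)$ as unknown: it writes the characteristic polynomial of a general $3\times 3$ cell matrix as $x^{3}-(\alpha^{2}+\beta^{2}+\gamma^{2})x-2\alpha\beta\gamma$ with $\alpha=a_1+a_2$, $\beta=a_1+a_3$, $\gamma=a_2+a_3$, equates coefficients with $(x-\lambda_1)(x-\lambda_2)(x-\lambda_3)$ using $\lambda_1+\lambda_2+\lambda_3=0$, normalizes $\gamma=|\lambda_3|$, and then deduces $\alpha^{2}+\beta^{2}=2\alpha\beta$, hence $\alpha=\beta$, which forces $a_2=a_3$ and solves for the stated vector. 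You instead take the stated vector as given, use the symmetry $a_2=a_3$ to factor the characteristic polynomial as $-(\lambda+c)\bigl(\lambda^{2}-c\lambda-2b^{2}\bigr)$ (equivalently, to split off the eigenvector $(0,1,-1)^{T}$), and identify the roots by Vieta. Both arguments hinge on the same characteristic-polynomial computation and both must check $a_1>0$; the paper does this more quickly by observing $|\lambda_3|=\min\{|\lambda_1|,|\lambda_2|,|\lambda_3|\}$, so $|\lambda_1\lambda_2|\ge \lambda_3^{2}$, whereas you expand $-\lambda_1\lambda_2=\lambda_2^{2}+\lambda_2\lambda_3$ and argue from $\lambda_2\le\lambda_3<0$ --- both are valid. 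What your route buys is brevity and logical economy: since the proposition hands you $\vec a$ explicitly, direct verification suffices, and the factorization makes the spectrum transparent. What the paper's route buys is the derivation itself: it explains where the formula for $\vec a$ comes from and why $a_2=a_3$ is forced once $\gamma=|\lambda_3|$ is chosen, which is the observation that motivates the block constructions with repeated entries in the rest of the paper.
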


\begin{proof}
Let $\vec{a}= (a_{1}, a_{2}, a_{3})$, where $a_{i}$ are positive real numbers. The explicit form of the cell matrix $D(\vec{a})$ is given by
\begin{center}
 $D(\vec{a})  =$
 $\left[
 \begin{array}{cccccccccccccccccc}
0 & a_{1}+a_{2} & a_{1}+a_{3}\\
a_{1}+a_{2} & 0 & a_{2}+a_{3}\\
a_{1}+a_{3} & a_{2}+ a_{3} & 0\\
 \end{array}
 \right]. $ 	
\end{center}
So, its characteristic polynomial is $$\Delta_ x (D(\vec{a}))= x^{3}- (\alpha^{2}+ \beta^{2}+ \gamma^{2}) x - 2 \alpha \beta \gamma, $$
where $\alpha= a_{1}+a_{2}, \beta= a_{1}+ a_{3}$ and $\gamma= a_{2}+ a_{3}$.  If we require that $\sigma$ is the spectrum of $D(\vec{a})$, then
$$\Delta_ x (D(\vec{a}))= x^{3} - (\lambda_{1}+ \lambda_{2}+ \lambda_{3}) x^{2} + (\lambda_{1} \lambda_{2}+ \lambda_{1}\lambda_{3} + \lambda_{2} \lambda_{3}) x - \lambda_{1}\lambda_{2}\lambda_{3}.$$
By comparing the coefficient of the polynomial $\Delta_ x (D(\vec{a}))$ and by using the assumption that $\lambda_{1}+\lambda_{2}+\lambda_{3}= 0,$ we conclude that
\begin{center}
$\alpha^{2}+ \beta^{2}+ \gamma^{2}=  \lambda_{1}^{2}- \lambda_{2} \lambda_{3}$\quad
and \quad $ 2\alpha \beta \gamma = \lambda_{1}\lambda_{2}\lambda_{3}.$
\end{center}
Now, we choose $\gamma= \arrowvert \lambda_{3}\arrowvert$, so $\gamma >0$.   Then $$2\alpha \beta = \frac{\lambda_{1} \lambda_{2} \lambda_{3}}{\gamma} = \frac{\lambda_{1} \lambda_{2} \lambda_{3}}{\arrowvert \lambda_{3} \arrowvert} = -\lambda_{1} \lambda_{2},$$
and thus

	\begin{eqnarray*}
		\alpha^{2} + \beta^{2}	&=& \lambda_{1}^{2}- \lambda_{2}\lambda_{3}- \gamma^{2}\\
		&=& \lambda_{1}^{2}- \lambda_{2}\lambda_{3}- \lambda_{3}^{2}\\
		&=& (\lambda_{1}- \lambda_{3}) (\lambda_{1}+ \lambda_{3}) - \lambda_{2}\lambda_{3}\\
		&=& (\lambda_{1}-\lambda_{3})(-\lambda_{2}) - \lambda_{2}\lambda_{3}\\
		&=& -\lambda_{1}\lambda_{2}\\
		&=& 2\alpha \beta .
	\end{eqnarray*}
 This implies that $\alpha = \beta$ which means $a_{1} +a_{2}= a_{1}+ a_{3}$ and hence $a_{2}= a_{3}$.  This also implies that $2 \alpha^{2} = -\lambda_{1} \lambda_{2}$ and hence $\alpha= \sqrt{\frac{\lvert \lambda_{1} \lambda_{2}\rvert}{2}}= \beta$.   Then
$$a_{1}= \sqrt{\frac{\lvert \lambda_{1} \lambda_{2}\rvert}{2}}- a_{2} \quad \hbox{and} \quad a_{2}= \frac{2a_{2}}{2}= \frac{a_{2}+a_{3}}{2}= \frac{\gamma}{2}= \frac{\lvert \lambda_{3}\rvert}{2}>0.$$
Since $\lvert \lambda_{3}\rvert = \min \{ \lvert \lambda_{1}\rvert, \lvert \lambda_{2}\rvert, \lvert \lambda_{3}\rvert \}$, $$a_{1}=  \sqrt{\frac{\lvert \lambda_{1} \lambda_{2}\rvert}{2}}- \frac{\lvert \lambda_{3}\rvert}{2}>0.$$ Therefore the cell matrix  $D(\vec{a})$  constructed from  $\vec{a}= (a_{1}, a_{2}, a_{2})$ has the spectrum $\sigma$.
\end{proof}

For example, if $\sigma= \{ -1, -2, 3\}$, then
\begin{center}
	$D(\vec{a})  =$
	$\left[
	\begin{array}{cccccccccccccccccc}
	0 & \sqrt{3} & \sqrt{3}\\
	\sqrt{3} & 0 & 1\\
	\sqrt{3} & 1 & 0\\
	\end{array}
	\right] $
\end{center}
is a cell matrix with the given spectrum $\sigma$.  It seems complicated to investigate the inverse eigenvalue problem in this way, even for $4\times 4$ matrices.  However, we have some observations on the reduction of the problem on cell matrices having a block form.

 Here, we present a sufficient condition for a reconstruction $n \times n$ cell matrices with the given set of $2k$ eigenvalues in which we are free to choose $k$ distinct eigenvalues.
\begin{thm}\label{mainthm}
	Let $S= \{\lambda_{1}, \lambda_{2},\dots,\lambda_{k},\lambda_{k+1},\dots,\lambda_{k+1},\dots,\lambda_{2k},\dots, \lambda_{2k} \}$ be a subset of real numbers in which $\lambda_{k+1}, \dots,  \lambda_{2k}$ have multiplicities $(l_{1}-1), \dots,  (l_{k}-1)$ respectively, where $l_{1}  + \dots + l_{k} = n$ and $l_{i} >1$ for $i = 1, 2, \dots , k$.  If elements of $S$ satisfy:
	\begin{enumerate}
		\item$\lambda_{1}>0$ and $\lambda_{2}, \lambda_{3}, \dots, \lambda_{k} ,\dots ,\lambda_{2k} < 0$,
		\item$ \lambda_{1}, \lambda_{2},\dots, \lambda_{k},$ are the roots of the characteristic polynomial of the $k \times k$ matrix $D^{(k)}(\vec{x})$ with
	\end{enumerate}
\begin{equation*}
(D^{(k)}(\vec{x}))_{ij} = \left\{
\begin{array}{ll}
(l_{k-j+1}-1)(2x_{k-j+1}) & \hbox{if $i = j$,} \\
l_{k-j+1}(x_{k-j+1}+x_{k-i+1}) & \hbox{if $i \neq j$,} \\
\end{array}
\right.
\end{equation*}
where $x_{i}:= -\frac{\lambda_{k+i}}{2},$ for $i=1,2, \dots, k$, then there is a cell matrix $D(\vec{x})$ such that $\sigma(D(\vec{x})) = S$.  Explicitly, $D(\vec{x})$ can be reconstructed from the vector
	$$ \vec{x} = (\underbrace{-\frac{\lambda_{k+1}}{2},  \dots , -\frac{\lambda_{k+1}}{2}}_{l_{1}},\underbrace{-\frac{\lambda_{k+2}}{2}, \dots , -\frac{\lambda_{k+2}}{2}}_{l_{2}}, \dots, \underbrace{-\frac{\lambda_{2k}}{2},  \dots , -\frac{\lambda_{2k}}{2}}_{l_{k}}).$$
	\end{thm}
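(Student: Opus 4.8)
The plan is to exploit the fact that $\vec{x}$ is constant on each of $k$ blocks, the $i$-th block consisting of $l_i$ copies of $x_i = -\lambda_{k+i}/2$. Since each $\lambda_{k+i} < 0$ by hypothesis (1), every $x_i > 0$, so $\vec{x}$ is a legitimate positive vector and $D(\vec{x})$ is a genuine cell matrix. I would then decompose $\mathbb{R}^n = U \oplus V$, where $U$ is the $k$-dimensional space of vectors that are constant on each block and $V$ is the $(n-k)$-dimensional space of vectors whose entries sum to zero on each block (these are complementary since a vector lying in both forces $l_i c_i = 0$ on every block). A direct computation of $(D(\vec{x})v)_p$ for an index $p$ in block $i$ — which equals $x_i S + W - 2x_i v_p$, with $S = \sum_q v_q$, $W = \sum_q x_{b(q)} v_q$, and $b(q)$ denoting the block of index $q$ — shows that both $U$ and $V$ are invariant, so the spectrum splits accordingly.

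On $V$: if $v$ is supported on block $i$ and sums to zero there, then $S = W = 0$ and $(D(\vec{x})v)_p = -2x_i v_p = \lambda_{k+i} v_p$. Hence every such vector is an eigenvector with eigenvalue $\lambda_{k+i}$, and the zero-sum subspace of block $i$ has dimension $l_i - 1$. Ranging over $i = 1, \dots, k$ produces exactly the eigenvalues $\lambda_{k+1}, \dots, \lambda_{2k}$ with the prescribed multiplicities $l_1 - 1, \dots, l_k - 1$, accounting for $n - k$ of the $n$ eigenvalues.

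On $U$: for $v$ constant on each block, with value $c_i$ on block $i$, the same formula gives $(D(\vec{x})v)_p = \sum_j l_j(x_i + x_j)c_j - 2x_i c_i$, which is again constant on block $i$. Thus $D(\vec{x})$ restricted to $U$ is represented by the $k \times k$ matrix $M$ with $M_{ii} = 2x_i(l_i - 1)$ and $M_{ij} = l_j(x_i + x_j)$ for $i \neq j$. The remaining task is to identify $\sigma(M)$ with $\{\lambda_1, \dots, \lambda_k\}$. Comparing $M$ entrywise with the matrix $D^{(k)}(\vec{x})$ of hypothesis (2), I expect them to differ only by the simultaneous row-and-column reversal $i \mapsto k - i + 1$; that is, $D^{(k)}(\vec{x}) = P M P^{-1}$ for the reversal permutation matrix $P$, so the two are similar and share a characteristic polynomial. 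By hypothesis (2) the roots of that polynomial are $\lambda_1, \dots, \lambda_k$, supplying the final $k$ eigenvalues.

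Combining the two pieces yields $\sigma(D(\vec{x})) = \{\lambda_1, \dots, \lambda_k\} \cup \{\lambda_{k+i} \text{ with multiplicity } l_i - 1\} = S$. In the paper's idiom, the passage to $U \oplus V$ coordinates is a similarity by a product of the elementary matrices $W_{ij}$ and $S_{ij}(\lambda)$ that block-triangularizes $D(\vec{x})$ into a single $k \times k$ block $M$ together with diagonal entries $\lambda_{k+i}$, after which the block-triangular spectrum rule quoted in the preliminaries applies. The main obstacle I anticipate is purely bookkeeping: verifying the invariance computation cleanly and, above all, pinning down the index-reversal correspondence between my reduced matrix $M$ and the stated $D^{(k)}(\vec{x})$, checking that both the diagonal and off-diagonal entries match under $i \mapsto k-i+1$. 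Once that relabeling is confirmed, hypothesis (2) delivers the remaining eigenvalues at once.
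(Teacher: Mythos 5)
Your proposal is correct, but it proves the theorem by a genuinely different method than the paper. The paper follows the elementary-matrix technique announced in its preliminaries: in $k$ successive stages it conjugates $D(\vec{x})$ by row-sum matrices $S_{pq}(-1)$ (together with swaps $W_{pq}$ to reorder rows and columns), each stage splitting off a diagonal block $\operatorname{diag}(\lambda_{k+i},\dots,\lambda_{k+i})$ of size $l_i-1$ from a block-triangular form, until only the $k\times k$ matrix $D^{(k)}(\vec{x})$ remains; the block-triangular spectrum rule then gives $\sigma(D(\vec{x}))=\sigma(D^{(k)}(\vec{x}))\cup\{\lambda_{k+i}\ \text{with multiplicity}\ l_i-1\}$. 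You instead split $\mathbb{R}^n=U\oplus V$ into the $D(\vec{x})$-invariant subspaces of block-constant and blockwise zero-sum vectors, and this route is sound: your formula $(D(\vec{x})v)_p=x_iS+W-2x_iv_p$ is right; the zero-sum vectors supported on block $i$ are honest eigenvectors for $\lambda_{k+i}=-2x_i$, which makes the multiplicities $l_i-1$ visible at a glance; and your reduced matrix $M$ on $U$, with $M_{ii}=2x_i(l_i-1)$ and $M_{ij}=l_j(x_i+x_j)$, does satisfy $D^{(k)}(\vec{x})=PMP$ for the reversal permutation matrix $P$ --- the entrywise identity $(D^{(k)}(\vec{x}))_{ij}=M_{k-i+1,\,k-j+1}$ checks out on both diagonal and off-diagonal entries --- so hypothesis (2) hands you $\sigma(M)=\{\lambda_1,\dots,\lambda_k\}$ and hence $\sigma(D(\vec{x}))=S$. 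What your route buys is conceptual economy: explicit eigenvectors, no long chain of similarity operations, and a one-line computation of the restriction to $U$; it would also adapt immediately to any matrix that is constant on such blocks. What the paper's route buys is a completely explicit reduction by elementary matrices, consistent with the method it advertises, requiring nothing beyond the block-triangular spectrum rule quoted in its preliminaries. Both arguments are complete proofs of the theorem.
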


\begin{proof}
	We construct $D(\vec{x})$ from the vector $\vec{x}$ given in the theorem.
 Denote $1_{m \times n}$ the $m \times n$ matrix whose its entry are all $1$ and $x_{i}:= -\frac{\lambda_{k+i}}{2}$ for each $i= 1, 2, \dots, k$.  Then,  $D(\vec{x})$ is a block matrix whose the $(s,t)$ block is given by the $l_{s} \times l_{t}$ matrix 
\begin{equation*}
D^{st} = \left\{
\begin{array}{ll}
(x_{s}+x_{t})1_{l_{s}\times l_{t}}& \hbox{if $s \neq t $,} \\
2x_{s}(1_{l_{s}\times l_{t}}- I_{l_{s}}) & \hbox{if $s = t$,} \\
\end{array}
\right.
\end{equation*}
for $1\leq s, t \leq k$.

 For the first step, we use row sums and column sums on the rows $n, n-1, \dots, n-l_{k}+2$ by the row $n-l_{k}+1:=j_{k}$; namely, we calculate
\begin{center}
 $S_{n-l_{k}+2,j_{k}}(-1) \cdots S_{n-1,j_{k}}(-1)S_{n,j_{k}}(-1)D(\vec{x}) S^{-1}_{n,j_{k}}(-1) S^{-1}_{n-1,j_{k}}(-1) \cdots S^{-1}_{n-l_{k}+2,j_{k}}(-1) .$
 \end{center}
Then	$$D(\vec{x})  \sim
	\left[
	\begin{array}{cccccc}
D^{1}(\vec{x}) & \ast\\
0 & D(\lambda_{2k})\\
	\end{array}
	\right], $$ where $D(\lambda_{2k}) =\operatorname{diag} (\underbrace{\lambda_{2k},\dots,\lambda_{2k}}_{l_{k}-1})$ and
\begin{center}
 $D^{1}(\vec{x})  =$
	$\left[
	\begin{array}{cccccccccccccccccc}
	D^{11}& D^{12} & \dots & D^{1(k-1)} & \vec{v}_{1k}\\
	D^{12}& D^{22} & \dots & D^{2(k-1)} & \vec{v}_{2k}\\
	\vdots& \vdots & \ddots & \vdots & \vdots\\
	D^{1(k-1)}& D^{2(k-1)} & \dots & D^{(k-1)(k-1)} & \vec{v}_{(k-1)k}\\
	\vec{w}^{T}_{1k}&\vec{w}^{T}_{2k}& \dots& \vec{w}^{T}_{(k-1)(k)}& (l_{k}-1)(2x_{k})\\
	\end{array}
	\right] $
	\end{center}	
in which $\vec{v}_{ik}= (\underbrace{l_{k}(x_{i}+x_{k}), \dots, l_{k}(x_{i}+x_{k}) }_{l_{i}})^{T}$ and $\vec{w}_{ik}=(\underbrace{(x_{i}+x_{k}, \dots, (x_{i}+x_{k})}_{l_{i}})^{T}$.
Next, we swap the row $j_{k}$ with the  row $n-l_{k}-l_{k-1}+1:= j_{k-1}$ of $D^{1}(\vec{x})$; namely, we calculate   $$W_{j_{k},j_{k-1}}D^{1}(\vec{x})W_{j_{k},j_{k-1}} .$$
Then $D^{1}(\vec{x})  \sim D^{11}(\vec{x})$, where
\begin{center}
$D^{11}(\vec{x})=\left[
\begin{array}{cccccccccccccccccc}
D^{11}& D^{12}& \dots & D^{1(k-2)} & \vec{v}_{1k} & D^{1(k-1)}\\

D^{21}& D^{22}& \dots & D^{2(k-2)} & \vec{v}_{2k} & D^{2(k-1)}\\

\vdots & \vdots& \ddots& \vdots& \vdots&\vdots\\

D^{1(k-2)}& D^{2(k-2)}& \dots & D^{(k-2)(k-2)} & \vec{v}_{(k-2)k} & D^{(k-2)(k-1)}\\

\vec{w}^{T}_{1k}&\vec{w}^{T}_{2k}& \dots& \vec{w}^{T}_{(k-2)k}& (l_{k}-1)(2x_{k}) & \vec{w}^{T}_{(k-1)k}\\

D^{1(k-1)}& D^{2(k-1)}& \dots & D^{(k-2)(k-1)} & \vec{v}_{(k-1)k} & D^{(k-1)(k-1)}\\
\end{array}
\right].$
\end{center}

For the second step, we use row sums and column sums on the row $j_{k}, j_{k}-1, \dots, j_{k-1}+3, j_{k-1}+2$ by the row $j_{k-1}+1$ on the matrix $D^{11}(\vec{x})$; namely, we calculate
\begin{center}
$S_{j_{k-1}+2, j_{k-1}+1}(-1)S_{j_{k-1}+3, j_{k-1}+1}(-1) \cdots S_{j_{k}-1, j_{k-1}+1}(-1) S_{j_{k},j_{k-1}+1}(-1)$ \\
 $D^{11}(\vec{x})S^{-1}_{j_{k},j_{k-1}+1}(-1)  S^{-1}_{j_{k}-1, j_{k-1}+1}(-1) \cdots S^{-1}_{j_{k-1}+3, j_{k-1}+1}(-1) S^{-1}_{j_{k-1}+2, j_{k-1}+1}(-1). $
\end{center}
Then
$$D^{11}(\vec{x})  \sim
\left[
\begin{array}{cccccc}
D^{2}(\vec{x}) & \ast\\
0 & D(\lambda_{2k-1})\\
\end{array}
\right], $$ where $D(\lambda_{2k-1}) = \operatorname{diag} (\underbrace{\lambda_{2k-1},\dots,\lambda_{2k-1}}_{l_{k-1}-1})$ and
\begin{center}
$D^{2}(\vec{x})= \left[
		\begin{array}{ccccccccccccc}
		D^{11}& D^{12}& \dots & D^{1(k-2)} & \vec{v}_{1k} & \vec{v}_{1(k-1)}\\
		D^{12}& D^{22}& \dots & D^{2(k-2)} & \vec{v}_{2k} & \vec{v}_{2(k-1)}\\
		\vdots & \vdots& \ddots & \vdots & \vdots & \vdots\\
		D^{1(k-2)}& D^{2(k-2)}& \dots & D^{(k-2)(k-2)} & \vec{v}_{(k-2)k} & \vec{v}_{(k-2)(k-1)}\\
		\vec{w}^{T}_{1k}&\vec{w}^{T}_{2k}& \dots &\vec{w}^{T}_{(k-2)k}& (l_{k}-1)(2x_{k})& l_{k-1}(x_{k-1}+x_{k})\\
	\vec{w}^{T}_{1(k-1)}&\vec{w}^{T}_{2(k-1)}& \dots &\vec{w}^{T}_{(k-2)(k-1)}& l_{k}(x_{k-1}+x_{k})& (l_{k-1}-1)(2x_{k-1})\\	
		\end{array}
		\right]$
\end{center}
in which 
$$\vec{v}_{i(k-1)}= (\underbrace{l_{k-1}(x_{i}+x_{k-1}), \dots, l_{k-1}(x_{i}+x_{k-1}) }_{l_{i}})^{T} \hbox{ and } \vec{w}_{i(k-1)}= (\underbrace{x_{i}+x_{k-1}, \dots, x_{i}+x_{k-1}}_{l_{i}})^{T}.$$
Next, we swap the row $l_{1}+ \dots+ l_{k-3}+1= j_{k-2}$ with the row $j_{k-1}$ and swap the row $j_{k-2}+1$ with the row $j_{k-1}+1$; namely, we calculate $$W_{j_{k-2}+1, j_{k-1}+1}W_{j_{k-2}, j_{k-1}} D^{2}(\vec{x})W_{j_{k-2}, j_{k-1}}W_{j_{k-2}+1, j_{k-1}+1}.$$   Then $D^{2}(\vec{x})\sim D^{22}(\vec{x})$, where $D^{22}(\vec{x})$ is the matrix
\begin{center}
	$ \left[
	\begin{array}{ccccccccccccc}
	D^{11}& D^{12}& \dots & D^{1(k-3)} & \vec{v}_{1k} & \vec{v}_{1(k-1)} & D^{1(k-2)}\\
	
	D^{12}& D^{22}& \dots & D^{2(k-3)} & \vec{v}_{2k} & \vec{v}_{2(k-1)} & D^{2(k-2)}\\
	
	\vdots & \vdots& \ddots & \vdots & \vdots & \vdots & \vdots\\
	
	D^{1(k-3)}& D^{2(k-3)}& \dots & D^{(k-3)(k-3)} & \vec{v}_{(k-3)k} & \vec{v}_{(k-3)(k-1)} & D^{(k-3)(k-3)}\\
	
	\vec{w}^{T}_{1k}&\vec{w}^{T}_{2k}& \dots &\vec{w}^{T}_{(k-3)k}& (l_{k}-1)(2x_{k})& l_{k-1}(x_{k-1}+x_{k}) & \vec{w}^{T}_{(k-2)k}\\
	
	\vec{w}^{T}_{1(k-1)}&\vec{w}^{T}_{2(k-1)}& \dots &\vec{w}^{T}_{(k-3)(k-1)}& l_{k}(x_{k-1}+x_{k})& (l_{k-1}-1)(2x_{k-1}) & \vec{w}^{T}_{(k-2)(k-1)}\\	
	
	D^{1(k-2)}& D^{2(k-2)}& \dots & D^{(k-3)(k-3)} & \vec{v}_{(k-2)k} & \vec{v}_{(k-2)(k-1)} & D^{(k-2)(k-2)}\\
	
	\end{array}
	\right]$
\end{center}
	
We continue the same process until the step $k^{th}$ which we use row sums and column sums on the row $l_{1}, l_{1}-1, \dots, 3, 2$ by the row $ 1$ on the matrix $D_{k-1}:=D^{((k-1)(k-1))}(\vec{x})$; namely, we calculate
\begin{center}
$S_{2,1}(-1)S_{3,1}(-1)\cdots S_{l_{1}-1,1}(-1)S_{l_{1},1}(-1) D_{(k-1)}S^{-1}_{l_{1},1}(-1)S^{-1}_{l_{1}-1,1}(-1) \cdots S^{-1}_{3,1}(-1) S^{-1}_{2,1}(-1) $.
\end{center}
Then
$$D_{k-1}  \sim
\left[
\begin{array}{cccccc}
D^{k}(\vec{x}) & \ast\\
0 & D(\lambda_{k+1})\\
\end{array}
\right], $$ where $D(\lambda_{k+1}) = \operatorname{diag} (\underbrace{\lambda_{k+1},\dots,\lambda_{k+1}}_{l_{1}-1})$ and
\begin{center}
	$D^{k}(\vec{x})= \left[
	\begin{array}{cccccccccccc}
(l_{k}-1)(2x_{k}) & l_{k-1}(x_{k-1} + x_{k}) & \dots & l_{2} (x_{2}+x_{k}) & l_{1} (x_{1} + x_{k})\\

l_{k} (x_{k-1}+ x_{k}) & (l_{k-1}-1)(2x_{k-1}) &  \dots & l_{2} (x_{2}+x_{k-1}) & l_{1} (x_{1} + x_{k-1})\\

l_{k}(x_{k-2}+x_{k}) & l_{k-1}(x_{k-2} + x_{k-1}) &  \dots & l_{2} (x_{2}+x_{k-2}) & l_{1} (x_{1} + x_{k-2})\\

\vdots & \vdots &  \ddots & \vdots & \vdots\\

l_{k}(x_{2}+x_{k}) & l_{k-1}(x_{2} + x_{k-1}) &  \dots & (l_{2}-1) (2x_{2}) & l_{1} (x_{1} + x_{2})\\

l_{k}(x_{1}+x_{k}) & l_{k-1}(x_{1} + x_{k-1}) &  \dots & l_{2} (x_{1}+ x_{2}) & (l_{1}-1) (2x_{1})\\	
	\end{array}
	\right]_{k\times k.}$
\end{center}

If  $ \lambda_{1}, \lambda_{2},\dots, \lambda_{k}$ are the roots of the characteristic polynomial of the matrix $D^{(k)}(\vec{x})$, then $D(\vec{x})$ is a cell matrix with the given spectrum $S$.
\end{proof}
In particular, when $k=1$, we have:
\begin{cor} Let $n$ be a positive integer and $\lambda$ be a positive real number.  If $$S=\{ (n-1)\lambda, \underbrace{-\lambda,-\lambda,\dots,-\lambda}_{n-1} \},$$ then there is an $n\times n$ cell matrix $D(\vec{x})$ with $\sigma(D(\vec{x}))=S$.  Precisely $D(\vec{x})$ is constructed from the vector $ \vec{x} =(\underbrace{\frac{\lambda}{2}, \frac{\lambda}{2}, \cdots ,  \frac{\lambda}{2}}_n).$
\end{cor}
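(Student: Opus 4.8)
The plan is to give a short direct argument and then observe that the statement is exactly the $k=1$ specialization of Theorem \ref{mainthm}. First I would exploit that every coordinate of $\vec{x}$ equals $\frac{\lambda}{2}$: the defining formula of a cell matrix then forces every off-diagonal entry to be $x_i + x_j = \frac{\lambda}{2} + \frac{\lambda}{2} = \lambda$, while the diagonal remains zero. Hence
\begin{equation*}
D(\vec{x}) = \lambda\,(1_{n\times n} - I_n),
\end{equation*}
where $1_{n\times n}$ is the all-ones matrix and $I_n$ the $n\times n$ identity.

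Next I would record the classical spectrum of $1_{n\times n}$: the all-ones vector is an eigenvector with eigenvalue $n$, and its orthogonal complement (of dimension $n-1$) is the eigenspace for eigenvalue $0$, so that $\sigma(1_{n\times n}) = \{\, n, \underbrace{0,\dots,0}_{n-1}\,\}$. Since $D(\vec{x})$ is the affine expression $\lambda\,1_{n\times n} - \lambda I_n$ in $1_{n\times n}$, the two matrices share eigenvectors and the eigenvalues transform under $t \mapsto \lambda(t-1)$. Applying this map gives $\sigma(D(\vec{x})) = \{\, \lambda(n-1), \underbrace{-\lambda,\dots,-\lambda}_{n-1} \,\} = S$, which is the desired conclusion.

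For completeness I would also indicate the route through Theorem \ref{mainthm} with $k=1$ and $l_1 = n$. Setting $\lambda_1 = (n-1)\lambda$ and $\lambda_2 = -\lambda$, condition (1) holds because $\lambda_1 > 0$ and $\lambda_2 < 0$. The reduced matrix $D^{(1)}(\vec{x})$ is the $1\times 1$ matrix $[(l_1-1)(2x_1)]$ with $x_1 = -\frac{\lambda_2}{2} = \frac{\lambda}{2}$, so it equals $[(n-1)\lambda] = [\lambda_1]$, whose unique characteristic root is precisely $\lambda_1$; this verifies condition (2). The theorem then delivers the reconstruction from $\vec{x} = (\frac{\lambda}{2}, \dots, \frac{\lambda}{2})$.

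I do not expect any genuine obstacle: the only things to watch are the trivial edge case $n=1$ (where $S = \{0\}$ and $D(\vec{x}) = [0]$), and the bookkeeping that the multiplicity $n-1$ of $-\lambda$ in $S$ matches $l_1 - 1$ in the theorem's notation. The direct computation via $D(\vec{x}) = \lambda(1_{n\times n} - I_n)$ is the cleaner of the two routes and sidesteps the reduction machinery of Theorem \ref{mainthm} entirely.
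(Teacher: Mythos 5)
Your proposal is correct. The paper gives no separate proof for this corollary: it simply presents it as the $k=1$ specialization of Theorem \ref{mainthm}, which is exactly your second route, and your verification of that route is accurate --- with $l_1 = n$ and $x_1 = -\frac{\lambda_2}{2} = \frac{\lambda}{2}$, the reduced matrix $D^{(1)}(\vec{x})$ is the $1\times 1$ matrix $[(n-1)\lambda]$, so condition (2) holds with $\lambda_1 = (n-1)\lambda$, and the theorem hands back the vector $\vec{x} = (\frac{\lambda}{2},\dots,\frac{\lambda}{2})$. Your primary route, however, is genuinely different and more elementary: writing $D(\vec{x}) = \lambda(1_{n\times n} - I_n)$ and reading the spectrum off the classical spectrum of the all-ones matrix sidesteps the elementary-row-operation machinery of the theorem entirely, gives the eigenvectors for free (the all-ones vector and its orthogonal complement), and even covers the degenerate case $n=1$, which the theorem's hypotheses ($\lambda_1 > 0$, $l_i > 1$) formally exclude --- a point the paper glosses over by stating the corollary for every positive integer $n$. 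What the specialization route buys in exchange is uniformity: it exhibits the corollary as the simplest instance of the general reconstruction scheme rather than an isolated computation, which is evidently why the paper phrases it that way. Including both, as you do, is the most complete treatment.
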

Also, when $k=2$ we have:
\begin{cor}\label{cor2}
	If $ S= \{\lambda_{1}, \lambda_{2}, \underbrace{\lambda_{3},\lambda_{3},\dots,\lambda_{3}}_{l_{1}-1},\underbrace{\lambda_{4},\lambda_{4},\dots,\lambda_{4}}_{l_{2}-1}\}$ with $l_{1}+ l_{2}= n$ and $l_{1}, l_{2}> 0$ satisfies:
	\begin{enumerate}
		\item$\lambda_{1}>0 $ and $ \lambda_{2},\lambda_{3}, \lambda_{4}< 0 $,
	    \item$ \lambda_{1}=[(l_{1}-1)(-\frac{\lambda_{3}}{2})+ (l_{2}-1)(-\frac{\lambda_{4}}{2})]+$ \\
		\indent \indent $\sqrt{(l_{1}(n-2)+1)(-\frac{\lambda_{3}}{2})^{2}+ \frac{1}{2}(n-1)\lambda_{3}\lambda_{4}+(n^{2}-n(l_{1}+2)+2l_{1}+1)(-\frac{\lambda_{4}}{2})^{2}}$,
		\item$  \lambda_{2}=[(l_{1}-1)(-\frac{\lambda_{3}}{2})+ (l_{2}-1)(-\frac{\lambda_{4}}{2})]-$ \\
		\indent \indent	$\sqrt{(l_{1}(n-2)+1)(-\frac{\lambda_{3}}{2})^{2}+ \frac{1}{2}(n-1)\lambda_{3}\lambda_{4}+(n^{2}-n(l_{1}+2)+2l_{1}+1)(-\frac{\lambda_{4}}{2})^{2}}$.
	\end{enumerate}
	then there is a cell matrix $D(\vec{x})$ such that $\sigma (D(\vec{x})) = S$. Explicitly, $D(\vec{x})$ can be reconstructed from the vector
	\begin{center}
		$ \vec{x} =(\underbrace{-\frac{\lambda_{3}}{2}, -\frac{\lambda_{3}}{2}, \cdots , -\frac{\lambda_{3}}{2}}_{l_{1}},\underbrace{-\frac{\lambda_{4}}{2}, -\frac{\lambda_{4}}{2}, \cdots , -\frac{\lambda_{4}}{2}}_{l_{2}}).$
	\end{center}
\end{cor}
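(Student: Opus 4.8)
The plan is to recognize that Corollary \ref{cor2} is precisely the case $k=2$ of Theorem \ref{mainthm} written out explicitly, so that the only genuine work is to discharge condition (2) of the theorem by solving the resulting $2\times 2$ characteristic equation. First I would specialize the theorem to $k=2$. Here the two repeated eigenvalues are $\lambda_3=\lambda_{k+1}$ and $\lambda_4=\lambda_{2k}$ with multiplicities $l_1-1$ and $l_2-1$, the prescribed variables are $x_1=-\lambda_3/2$ and $x_2=-\lambda_4/2$, and the reconstructing vector is exactly the one displayed in the corollary. The sign hypothesis (1) of the corollary is the same as condition (1) of the theorem, and the multiplicity/summation data agree under $n=l_1+l_2$. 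Thus, once condition (2) of the theorem — that $\lambda_1,\lambda_2$ are the roots of $\det\!\big(xI-D^{(2)}(\vec x)\big)=0$ — is shown to be equivalent to conditions (2)--(3) of the corollary, the conclusion follows directly by invoking Theorem \ref{mainthm}.

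Next I would write down $D^{(2)}(\vec x)$ from the defining formula with $k=2$, obtaining
$$D^{(2)}(\vec x) = \begin{pmatrix} (l_2-1)(2x_2) & l_1(x_1+x_2) \\ l_2(x_1+x_2) & (l_1-1)(2x_1) \end{pmatrix},$$
and compute its eigenvalues by the quadratic formula as
$$\frac{a+d}{2} \pm \sqrt{\left(\frac{a-d}{2}\right)^2 + bc},$$
where $a,b,c,d$ denote the four entries. The non-radical part is the half-trace $(l_1-1)x_1+(l_2-1)x_2$, which upon substituting $x_1=-\lambda_3/2$ and $x_2=-\lambda_4/2$ becomes exactly the bracketed term $\big[(l_1-1)(-\tfrac{\lambda_3}{2})+(l_2-1)(-\tfrac{\lambda_4}{2})\big]$ appearing in conditions (2) and (3).

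The heart of the verification, and the step I expect to be the main obstacle, is matching the radicand. Expanding
$$\left(\frac{a-d}{2}\right)^2 + bc = \big[(l_2-1)x_2-(l_1-1)x_1\big]^2 + l_1 l_2 (x_1+x_2)^2$$
and collecting the coefficients of $x_1^2$, $x_2^2$, and $x_1 x_2$ gives $(l_1-1)^2+l_1 l_2$, $(l_2-1)^2+l_1 l_2$, and $2l_1 l_2-2(l_1-1)(l_2-1)$ respectively. Using $n=l_1+l_2$ to eliminate $l_2$, these three coefficients simplify to $l_1(n-2)+1$, $n^2-n(l_1+2)+2l_1+1$, and $2(n-1)$. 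Since $x_1^2=(-\tfrac{\lambda_3}{2})^2$, $x_2^2=(-\tfrac{\lambda_4}{2})^2$, and $x_1 x_2=\tfrac{\lambda_3\lambda_4}{4}$, the radicand becomes precisely the expression under the square roots in conditions (2) and (3). Hence the two eigenvalues of $D^{(2)}(\vec x)$ are exactly the $\lambda_1,\lambda_2$ prescribed by the corollary, condition (2) of Theorem \ref{mainthm} is met, and the theorem then yields a cell matrix $D(\vec x)$ with $\sigma(D(\vec x))=S$ reconstructed from the stated vector. The only care required is the routine bookkeeping in the three coefficient simplifications above.
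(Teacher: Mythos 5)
Your proposal is correct and takes essentially the same route as the paper: both specialize Theorem \ref{mainthm} to $k=2$, write down the $2\times 2$ matrix $D^{(2)}(\vec{x})$ (your matrix agrees with the paper's after substituting $x_1=-\lambda_3/2$, $x_2=-\lambda_4/2$), and identify $\lambda_1,\lambda_2$ as its eigenvalues. The only difference is that the paper simply asserts $\sigma(D^{(2)}(\vec{x}))=\{\lambda_1,\lambda_2\}$, whereas you carry out the quadratic-formula computation and the coefficient matching $(l_1-1)^2+l_1l_2=l_1(n-2)+1$, $(l_2-1)^2+l_1l_2=n^2-n(l_1+2)+2l_1+1$, $2l_1l_2-2(l_1-1)(l_2-1)=2(n-1)$ explicitly, which is a correct filling-in of the step the paper leaves implicit.
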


\begin{proof}
By Theorem \ref{mainthm}, for the case $k=2$, we have
$$ D^{2}(\vec{x}) =\left[
\begin{array}{ccccccccc}
-(l_{2}-1)\lambda_{4}& -\frac{l_{1}}{2}(\lambda_{3}+\lambda_{4})\\
-\frac{l_{2}}{2}(\lambda_{3}+\lambda_{4}) & -(l_{1}-1)\lambda_{3}
\end{array}
\right].$$
So, $\sigma(D^{2}(\vec{x}))=\{\lambda_1, \lambda_2\}$.
\end{proof}
For example, if we choose $\lambda_{3}= -2$, $\lambda_{4}= -4$ and $l_1=5, l_2=6$, then we compute by Corollary \ref{cor2} that $\lambda_1=14+ \sqrt{306}$ and $\lambda_2= 14- \sqrt{306}$.  Then, $$\sigma=\{14+ \sqrt{306}, 14- \sqrt{306}, -2, -2, -2, -2, -4, -4, -4, -4, -4\}$$ is the spectrum of the $11\times 11$ cell matrix constructed from the vector $$\vec{x}= (1, 1, 1, 1, 1, 2, 2, 2, 2, 2, 2).$$	 Precisely, 	
	\begin{center}
		$D(\vec{x}) =$  	$ \left[
		\begin{array}{ccccccccccccc}
		0 & 2& 2& 2& 2& 3& 3& 3& 3 &3 &3\\
		2 & 0& 2& 2& 2& 3& 3& 3& 3 &3 &3\\
		2 & 2& 0& 2& 2& 3& 3& 3& 3 &3 &3\\
		2 & 2& 2& 0& 2& 3& 3& 3& 3 &3 &3\\
		2 & 2& 2& 2& 0& 3& 3& 3& 3 &3 &3\\
		3 & 3& 3& 3& 3& 0& 4& 4& 4 &4 &4\\
		3 & 3& 3& 3& 3& 4& 0& 4& 4 &4 &4\\
		3 & 3& 3& 3& 3& 4& 4& 0& 4 &4 &4\\
		3 & 3& 3& 3& 3& 4& 4& 4& 0 &4 &4\\
		3 & 3& 3& 3& 3& 4& 4& 4& 4 &0 &4\\
		3 & 3& 3& 3& 3& 4& 4& 4& 4 &4 &0\\
		\end{array}
		\right]_{(11 \times 11)}$
	\end{center}
	is a cell matrix with the spectrum $\sigma$.

\begin{exaple}
	We construct a cell matrix of order $13$ by using Theorem \ref{mainthm}.   Here, we choose $\lambda_{4}= -2,\lambda_{5}= -3,\lambda_{6}= -5$ and choose $l_{1}= 4, l_{2}= 4,l_{3}= 5$.  Then the eigenvalues of $D^{(3)}(\vec{x})$ are $20+\sqrt{511}, 20- \sqrt{511}$ and $-5$.  So $$\sigma=\{ 20+\sqrt{511}, 20- \sqrt{511} , -5, -2, -2, -2, -3, -3, -3, -5, -5, -5, -5\}$$ satisfies the condition of Theorem \ref{mainthm}.  Thus we may construct the cell matrix from the vector
		$$\vec{x}= (1, 1, 1, 1, \frac{3}{2},\frac{3}{2}, \frac{3}{2}, \frac{3}{2}, \frac{5}{2}, \frac{5}{2}, \frac{5}{2}, \frac{5}{2}, \frac{5}{2}).$$	
In fact,
	\begin{center}
		$D(\vec{x}) =$  	$ \left[
		\begin{array}{cccccccccccccc}
		0 & 2 & 2 & 2 & \frac{5}{2} & \frac{5}{2} & \frac{5}{2} & \frac{5}{2} & \frac{7}{2} & \frac{7}{2} & \frac{7}{2} & \frac{7}{2} & \frac{7}{2} \\
        2 & 0 & 2 & 2 & \frac{5}{2} & \frac{5}{2} & \frac{5}{2} & \frac{5}{2} & \frac{7}{2} & \frac{7}{2} & \frac{7}{2} & \frac{7}{2} & \frac{7}{2} \\
        2 & 2 & 0 & 2 & \frac{5}{2} & \frac{5}{2} & \frac{5}{2} & \frac{5}{2} & \frac{7}{2} & \frac{7}{2} & \frac{7}{2} & \frac{7}{2} & \frac{7}{2} \\
        2 & 2 & 2 & 0 & \frac{5}{2} & \frac{5}{2} & \frac{5}{2} & \frac{5}{2} & \frac{7}{2} & \frac{7}{2} & \frac{7}{2} & \frac{7}{2} & \frac{7}{2} \\
        \frac{5}{2} & \frac{5}{2} & \frac{5}{2} & \frac{5}{2} & 0 & 3 & 3 & 3 & 4 & 4 & 4 & 4 & 4\\
        \frac{5}{2} & \frac{5}{2} & \frac{5}{2} & \frac{5}{2} & 3 & 0 & 3 & 3 & 4 & 4 & 4 & 4 & 4\\
        \frac{5}{2} & \frac{5}{2} & \frac{5}{2} & \frac{5}{2} & 3 & 3 & 0 & 3 & 4 & 4 & 4 & 4 & 4\\
        \frac{5}{2} & \frac{5}{2} & \frac{5}{2} & \frac{5}{2} & 3 & 3 & 3 & 0 & 4 & 4 & 4 & 4 & 4\\
        \frac{7}{2} & \frac{7}{2} & \frac{7}{2} & \frac{7}{2} & 4 & 4 & 4 & 4 & 0 & 5 & 5 & 5 & 5\\
        \frac{7}{2} & \frac{7}{2} & \frac{7}{2} & \frac{7}{2} & 4 & 4 & 4 & 4 & 5 & 0 & 5 & 5 & 5\\
        \frac{7}{2} & \frac{7}{2} & \frac{7}{2} & \frac{7}{2} & 4 & 4 & 4 & 4 & 5 & 5 & 0 & 5 & 5\\
        \frac{7}{2} & \frac{7}{2} & \frac{7}{2} & \frac{7}{2} & 4 & 4 & 4 & 4 & 5 & 5 & 5 & 0 & 5\\
        \frac{7}{2} & \frac{7}{2} & \frac{7}{2} & \frac{7}{2} & 4 & 4 & 4 & 4 & 5 & 5 & 5 & 5 & 0\\
\end{array}
		\right]_{(13 \times 13)}$
	\end{center}
	is a cell matrix with the spectrum $\sigma$.	
\end{exaple}

Note that the $k\times k$ matrix $D^{(k)}(\vec{x})$ in the Theorem \ref{mainthm} is a positive real matrix which may not be symmetric matrix depending on $l_i$'s and $\lambda_{k+i}$'s. However, since its spectrum is $\{\lambda_1,\dots,\lambda_k\}$ which is a subset of the spectrum of the cell matrix $D(\vec{x})$ with $\lambda_1>0$,  it  always happens that the spectrum of $D^{(k)}(\vec{x})$ must be a subset of real numbers with only one positive eigenvalue $\lambda_1$ satisfying $\lambda_1> |\lambda_2|+\cdots+|\lambda_k|$.

\section{Invariance of the spectrum}
For a vector $\vec{x}=(x_1,x_2,\dots,x_n)\in \mathbb{R}^n$ and a permutation $\pi\in S_n$, we denote $\pi(\vec{x})$ the vector $(x_{\pi(1)},x_{\pi(2)},\dots,x_{\pi(n)})$. 
\begin{lem}\label{lemforse}
Let $D(\vec{x})$ be the cell matrix constructed from $\vec{x}= (x_{1}, x_{2}, \dots, x_{n})$. Let $\pi_{1}= (l, k),$ for some distinct $l, k \in \{1, 2, \dots, n\}$, be a transposition in $S_{n}$. If $P$ is the permutation matrix corresponding to $\pi_{1}$, then $PD(\pi_{1}(\vec{x}))P= D(\vec{x})$.
\end{lem}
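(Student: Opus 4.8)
The plan is to verify the matrix identity $PD(\pi_1(\vec{x}))P = D(\vec{x})$ directly, entry by entry. Since $\pi_1=(l,k)$ is a transposition, its permutation matrix $P$ is obtained from the identity by swapping rows $l$ and $k$; in the notation of Section~2 this is exactly the elementary matrix $W_{lk}$, so $P$ is symmetric and satisfies $P=P^{T}=P^{-1}$. Consequently conjugation by $P$ permutes both the rows and the columns of any matrix $A$ according to $\pi_1$, that is,
\begin{equation*}
(PAP)_{ij}=A_{\pi_1(i),\pi_1(j)}\qquad\text{for all } i,j,
\end{equation*}
which I would first record; it follows from $(PA)_{ij}=A_{\pi_1(i),j}$ (row swap) and $(AP)_{ij}=A_{i,\pi_1(j)}$ (column swap), together with the fact that $\pi_1$ is its own inverse.

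Next I would pin down the entries of $\vec{y}:=\pi_1(\vec{x})$. By the definition of $\pi_1(\vec{x})$ given just before the lemma, $y_m=x_{\pi_1(m)}$ for every $m$, and because $\pi_1$ is an involution this yields $y_{\pi_1(i)}=x_{\pi_1(\pi_1(i))}=x_i$ for every $i$. Applying the displayed conjugation formula to $A=D(\vec{y})$, it therefore suffices to check that $D(\vec{y})_{\pi_1(i),\pi_1(j)}=D(\vec{x})_{ij}$ for all $i,j$.

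The verification splits into two cases. If $i=j$, then $\pi_1(i)=\pi_1(j)$, so both sides equal the diagonal value $0$. If $i\neq j$, then $\pi_1(i)\neq\pi_1(j)$ because $\pi_1$ is a bijection, so by the cell-matrix definition $D(\vec{y})_{\pi_1(i),\pi_1(j)}=y_{\pi_1(i)}+y_{\pi_1(j)}=x_i+x_j=D(\vec{x})_{ij}$, using the identity $y_{\pi_1(i)}=x_i$ established above. This proves the claim in all cases.

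There is no serious obstacle here; the entire content is careful index bookkeeping. The one point demanding attention is to keep distinct the two different actions of $\pi_1$: on the vector $\vec{x}$ it swaps the $l$-th and $k$-th \emph{entries}, whereas conjugation by $P$ permutes the row and column \emph{indices} of the matrix. The reason these two effects cancel is precisely that $\pi_1$ equals its own inverse, so that the index relabelling $(i,j)\mapsto(\pi_1(i),\pi_1(j))$ exactly undoes the entry swap $x_m\mapsto x_{\pi_1(m)}$ built into $D(\pi_1(\vec{x}))$.
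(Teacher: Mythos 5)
Your proof is correct. It establishes the same entry-by-entry identity as the paper, but it is organized differently, and the reorganization buys something. The paper's proof proceeds by brute force: it tabulates all eight cases for the position of $(i,j)$ relative to $\{l,k\}$ and compares the entries of $D(\vec{x})$, $D(\pi_1(\vec{x}))$ and $PD(\pi_1(\vec{x}))P$ row by row in a table. You instead isolate two structural facts --- the conjugation formula $(PAP)_{ij}=A_{\pi_1(i),\pi_1(j)}$, valid for any matrix $A$, and the relation $y_{\pi_1(i)}=x_i$ for $\vec{y}=\pi_1(\vec{x})$, which holds because $\pi_1$ is an involution --- after which only the two cases $i=j$ and $i\neq j$ remain. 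Beyond being shorter, your argument makes visible why the lemma is true (the index relabelling by $\pi_1$ exactly undoes the entry permutation), and, more importantly, it is not really tied to transpositions: for an arbitrary $\pi\in S_n$ with permutation matrix $P$ one has $(P^{-1}AP)_{ij}=A_{\pi^{-1}(i),\pi^{-1}(j)}$ and $y_{\pi^{-1}(i)}=x_{\pi(\pi^{-1}(i))}=x_i$, so the identical computation yields $P^{-1}D(\pi(\vec{x}))P=D(\vec{x})$. That proves Theorem 4.2 of the paper in a single step, bypassing the decomposition of $\pi$ into transpositions entirely; the only extra care required in that generalization is the point you already flagged, namely that a general permutation matrix satisfies $P^{-1}=P^{T}\neq P$, so the conjugation must place $P^{-1}$ on the correct side.
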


\begin{proof}
Since $\pi_{1}$ is a transposition $(l, k)$,

\begin{equation*}
(\pi_{1}(\vec{x}))_{i}= \left\{
\begin{array}{ll}
x_{i} & \hbox{if $i \neq l, k $} \\
x_{l}&\hbox{if $i =k $} \\
x_{k}&\hbox{if $i = l$.} \\
\end{array}
\right.
\end{equation*}
Note also that $PD(\pi_{1}(\vec{x}))P$ is the matrix obtained from $D(\pi_{1}(\vec{x}))$ by swapping the $l$-th row with the $k$-th row and the $l$-th column with the $j$-th column. Then, we can list the entries of $D(\vec{x}), D(\pi_{1}(\vec{x}))$ and $P(D(\pi_{1}(\vec{x})))P$ as in the table below:
\begin{center}
	\begin{tabular}{l|c|c|c}
		\hline
		$(i, j)$ & $D_{ij}= (D(\vec{x}))_{ij}$ & $\tilde{D}_{ij} = (D(\pi_{1}(\vec{x})))_{ij}$ &$ (P\tilde{D}P)_{ij}$ \\
		\hline
		$i=j$ & $0$ & $0$ & $0$ \\
		$i=l, j=k$ & $x_{l}+x_{k}$ & $ x_{k}+x_{l}  $ & $ \tilde{D}_{kl}= x_{l}+x_{k}$ \\
		$i = k, j=l $ & $x_{k}+x_{l}$ & $x_{l}+x_{k}$ & $ \tilde{D}_{lk}= x_{k}+x_{l}$ \\
		$i\notin \{l,k\}, j=k$ & $x_{i}+x_{k}$ & $x_{i}+x_{l}$ & $ \tilde{D}_{il} =x_{i}+x_{k}$ \\
		$i\notin \{l,k\}, j=l$ & $x_{i}+x_{l}$ & $x_{i}+x_{k}$ & $ \tilde{D}_{ik}= x_{i}+x_{l}$ \\
		$i = l, j\notin \{l,k\}$ & $x_{l}+x_{j}$ & $x_{k}+x_{j}$ & $ \tilde{D}_{kj}= x_{l}+x_{j}$ \\
		$i = k, j\notin \{l,k\}$ & $x_{k}+x_{j}$ & $x_{l}+x_{j}$ & $ \tilde{D}_{lj}= x_{k}+x_{j}$ \\
		$i, j\notin \{l,k\}$ & $x_{i}+x_{j}$ & $x_{i}+x_{j}$ & $ \tilde{D}_{ij}= x_{i}+x_{j}$ \\		
		\hline
	\end{tabular}
\end{center}
Hence $PD(\pi_{1}(\vec{x}))P= D(\vec{x})$.
\end{proof}

\begin{thm}
	Let $D(\vec{x})$ be $n \times n$ cell matrices with $ \vec{x} =(x_{1}, x_{2}, \dots, x_{n}).$ If $\pi \in S_{n}$ then $D(\vec{x})$ and $D(\pi(\vec{x}))$ have the same spectrum.	
\end{thm}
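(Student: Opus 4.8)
The plan is to leverage Lemma~\ref{lemforse} together with the fact that the symmetric group $S_n$ is generated by transpositions, so that the general statement follows from the transposition case by a telescoping (inductive) argument.

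First I would record the precise consequence of Lemma~\ref{lemforse} that is needed. If $\tau$ is any transposition and $P$ is its permutation matrix, then $P$ is symmetric and $P=P^{-1}$, so the identity $P\,D(\tau(\vec{y}))\,P=D(\vec{y})$ supplied by the lemma can be rewritten as $D(\tau(\vec{y}))=P^{-1}D(\vec{y})P$. Hence $D(\tau(\vec{y}))$ is similar to $D(\vec{y})$, and in particular $\sigma(D(\tau(\vec{y})))=\sigma(D(\vec{y}))$. The essential observation is that this holds for \emph{every} vector $\vec{y}$ of positive reals, not merely for $\vec{x}$ itself, since the lemma is stated for an arbitrary constructing vector.

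Next, given $\pi\in S_n$, I would write $\pi=\tau_1\tau_2\cdots\tau_m$ as a product of transpositions and form the chain of vectors $\vec{x}^{(0)}=\vec{x}$ and $\vec{x}^{(j)}=\tau_j(\vec{x}^{(j-1)})$ for $j=1,\dots,m$. Applying the previous paragraph at each step gives $\sigma(D(\vec{x}^{(j)}))=\sigma(D(\vec{x}^{(j-1)}))$, and transitivity then yields $\sigma(D(\vec{x}^{(m)}))=\sigma(D(\vec{x}))$. What remains is the bookkeeping identity $\vec{x}^{(m)}=\pi(\vec{x})$: using the composition rule $\sigma(\tau(\vec{x}))=(\tau\sigma)(\vec{x})$, which follows immediately from the definition $(\pi(\vec{x}))_i=x_{\pi(i)}$, one telescopes $\tau_m(\cdots\tau_1(\vec{x})\cdots)=(\tau_1\tau_2\cdots\tau_m)(\vec{x})=\pi(\vec{x})$, completing the argument.

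The only genuinely delicate point is this last bookkeeping step: the assignment $\vec{x}\mapsto\pi(\vec{x})$ is an anti-action (applying $\tau$ and then $\sigma$ produces $(\tau\sigma)(\vec{x})$, not $(\sigma\tau)(\vec{x})$), so one must decompose $\pi$ in the matching order, or simply note that since every permutation is \emph{some} product of transpositions the precise order is immaterial for the cospectrality conclusion. As a reassuring cross-check that sidesteps the induction entirely, a direct computation gives $(D(\pi(\vec{x})))_{ij}=x_{\pi(i)}+x_{\pi(j)}=(D(\vec{x}))_{\pi(i)\pi(j)}$ for $i\neq j$ (both sides vanishing when $i=j$), which is exactly the relation $D(\pi(\vec{x}))=P^{-1}D(\vec{x})P$ for the permutation matrix $P$ of $\pi$; this exhibits the similarity in a single stroke and confirms that the two spectra coincide.
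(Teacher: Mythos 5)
Your main argument is essentially the paper's own proof: both invoke Lemma~\ref{lemforse}, write $\pi$ as a product of transpositions, and telescope the resulting similarities, so on that score you match the paper. Two points distinguish your write-up, both to its credit. First, you flag the anti-action subtlety $\sigma(\tau(\vec{x}))=(\tau\sigma)(\vec{x})$; the paper glosses over this, and indeed with its stated decomposition $\pi=\pi_{m}\circ\cdots\circ\pi_{1}$ and its chain $\vec{x}_{i}=\pi_{i}(\vec{x}_{i-1})$ the final vector is $\vec{x}_{m}=(\pi_{1}\pi_{2}\cdots\pi_{m})(\vec{x})=\pi^{-1}(\vec{x})$ rather than $\pi(\vec{x})$. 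As you observe, this is harmless (relabel the transpositions in the matching order, or note that cospectrality for every product of transpositions already covers all of $S_{n}$), but your version is the more careful one. Second, your closing ``cross-check'' --- $(D(\pi(\vec{x})))_{ij}=x_{\pi(i)}+x_{\pi(j)}=(D(\vec{x}))_{\pi(i)\pi(j)}$, i.e. $D(\pi(\vec{x}))=P^{-1}D(\vec{x})P$ for the permutation matrix $P$ of $\pi$ --- is not merely a check: it is a complete and shorter proof that needs neither the lemma nor the decomposition into transpositions, and it sidesteps the ordering issue entirely. Either route is valid; the one-line conjugation delivers directly what the induction assembles step by step.
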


\begin{proof}
Let  $\pi \in S_{n}$. Note that $\pi$ can be written as a composition of transpositions in $S_{n}$; say,
\begin{center}
	$\pi= \pi_{m} \circ \pi_{m-1} \circ \dots \circ \pi_{1},$
\end{center}
where $\pi_{i}$ is a transposition in $S_{n}$, for each $i= 1, 2, \dots, m.$ We denote $\vec{x}_{i}= \pi_{i}(\vec{x}_{i-1})$, for $i= 1, 2, \dots, m,$ and $\vec{x}_{0}= \vec{x}$. So, $\vec{x}_{m}= \pi(\vec{x})$. Let $P_{i}$ be the permutation matrix corresponding to the transposition $\pi_{i}$. By Lemma \ref{lemforse}, we have $$P_{i}(D(\pi_{i}(\vec{x}_{i-1})))P_{i}= D(\vec{x}_{i-1}).$$ Since $P_{i}^{-1} = P_{i},$ we have that $D(\pi_{i}(\vec{x}_{i-1}))$ is similar to $D(\vec{x}_{i-1})$.
Hence $\sigma(D(\pi_{i}(\vec{x}_{i-1})))= \sigma(D(\vec{x}_{i-1}))$, for each $i= 1, 2, \dots,m.$ Therefore\\
	\begin{eqnarray*}
		\sigma(D(\pi(\vec{x})))	&=& \sigma(D(\vec{x}_{m}))\\
		&=& \sigma(D(\vec{x}_{m-1}))\\
		&& \vdots \\
		&=& \sigma(D(\vec{x}_{0}))= \sigma(D(\vec{x})).
	\end{eqnarray*}	
Hence $\sigma(D(\pi(\vec{x}))) = \sigma(D(\vec{x}))$.
\end{proof}
For example, if $\vec{x}=(1,2,3,4,5,6,7)$ and $\pi=(1\,4)(2\,5)(3\,7\,6)\in S_3$, then the vector $\pi (\vec{x})=(4,5,7,1,2,3,6)$.  Hence, the cell matrices 
$$  D(\vec{x})=\left(
                 \begin{array}{ccccccc}
                   0 & 3 & 4& 5 & 6 & 7 & 8 \\
                   3 & 0& 5 & 6 & 7 & 8 & 9 \\
                   4 & 5 & 0 & 7 & 8 & 9 & 10 \\
                   5 & 6 & 7 & 0 & 9 & 10 & 11 \\
                   6 & 7 & 8 & 9 & 0 & 11 & 12 \\
                   7 & 8 & 9 & 10 & 11 & 0 & 13 \\
                   8 & 9 & 10 & 11 & 12 & 13 & 0 \\
                 \end{array}
               \right)
\hbox { and }  D(\pi(\vec{x}))=\left(
                 \begin{array}{ccccccc}
                   0 & 9 & 11& 5 & 6 & 7 & 10 \\
                   9 & 0& 12 & 6 & 7 & 8 & 11 \\
                   11 & 12 & 0 & 8 & 9 & 10 & 13 \\
                   5 & 6 & 8 & 0 & 3 & 4 & 7 \\
                   6 & 7 & 9 & 3 & 0 & 5 & 8 \\
                   7 & 8 & 10 & 4 & 5 & 0 & 9 \\
                   10 & 11 & 13 & 7 & 8 & 9 & 0 \\
                 \end{array}
               \right)
$$
have the same spectrum.

\end{document}